\providecommand{\U}[1]{\protect\rule{.1in}{.1in}}
\newtheorem{theorem}{Theorem}
\newtheorem{problem}[theorem]{Problem}
\newenvironment{proof}[1][Proof]{\noindent\textbf{#1.} }{\ \rule{0.5em}{0.5em}}
\begin{document}

\title{An application of Kapteyn series to a problem from queueing theory}
\author{Diego Dominici}
\date{}
\maketitle

\section{Introduction}

In \cite[4.47]{MR1610068}, the authors considered the following problem:

\begin{problem}
Find the unique solution $C(D)$ in the range $(0,1)$ of the transcendental
equation%
\begin{equation}
1=\frac{D}{2\left(  D+1\right)  }F(C), \label{eqC}%
\end{equation}
where
\begin{equation}
F(C)=%
{\displaystyle\sum\limits_{n=-\infty}^{\infty}}
J_{n}\left(  \frac{n}{\sqrt{D+1}}\right)  C^{n}, \label{F}%
\end{equation}
$J_{n}(\cdot)$ is the Bessel function of the first kind and $D>0.$
\end{problem}

They also defined the functions $C_{1}\left(  D,a\right)  $ and $C_{2}\left(
D,a\right)  $ implicitly by%
\begin{equation}
0=\frac{a}{2\sqrt{D+1}}F(C)+\frac{1}{2}C_{1}F_{1}(C), \label{eqC1}%
\end{equation}%
\begin{equation}
0=\frac{1}{4\sqrt{D+1}}\left[  1-\frac{a^{2}}{2\left(  D+1\right)  }\right]
F_{2}(C)+\left[  \frac{1}{4}\frac{a}{\sqrt{D+1}}C_{1}+C_{2}\right]  F_{1}(C),
\label{eqC2}%
\end{equation}
where $a>0$ and%
\begin{equation}
F_{1}(C)=%
{\displaystyle\sum\limits_{n=-\infty}^{\infty}}
nJ_{n}\left(  \frac{n}{\sqrt{D+1}}\right)  C^{n},\quad F_{2}(C)=%
{\displaystyle\sum\limits_{n=-\infty}^{\infty}}
nJ_{n}^{\prime}\left(  \frac{n}{\sqrt{D+1}}\right)  C^{n}. \label{F12}%
\end{equation}
Using some properties and asymptotic approximations for the Bessel functions,
they proved that%
\begin{equation}
\left(  \sqrt{D+1}-\sqrt{D}\right)  \exp\left(  \sqrt{\frac{D}{D+1}}\right)
<C(D)<1, \label{bound}%
\end{equation}%
\begin{equation}
C(D)\sim1-\frac{D^{2}}{4},\quad C_{1}(D)\sim\frac{aD}{2},\quad C_{2}%
(D)\sim\frac{1}{4}-\frac{a^{2}}{8},\quad D\rightarrow0,\quad C(D)\sim
\sqrt{\frac{e}{D}},\quad D\rightarrow\infty. \label{asympt}%
\end{equation}

Series of the form (\ref{F}), (\ref{F12}) are called Kapteyn series
\cite{MR1986919}. The purpose of this work is to obtain exact solutions of
(\ref{eqC}), (\ref{eqC1}) and (\ref{eqC2}) using properties of such series.

\section{Main Result}

\begin{theorem}
The solutions of (\ref{eqC}), (\ref{eqC1}) and (\ref{eqC2}) are given by%
\begin{equation}
C(D)=\frac{\exp\left(  \frac{1}{2}\frac{D}{D+1}\right)  }{\sqrt{D+1}},\quad
C_{1}(D)=\frac{D}{2\left(  D+1\right)  ^{\frac{3}{2}}}a,\quad C_{2}%
(D)=\frac{\sqrt{D+1}}{4}-\frac{D+\left(  D+1\right)  ^{\frac{3}{2}}}{8\left(
D+1\right)  ^{2}}a^{2}. \label{solution}%
\end{equation}

\end{theorem}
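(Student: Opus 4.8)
The plan is to reduce (\ref{eqC}), (\ref{eqC1}) and (\ref{eqC2}) to a single closed‑form evaluation of a Kapteyn series. Write $\alpha=(D+1)^{-1/2}$. Starting from the generating function $e^{\frac{z}{2}(t-t^{-1})}=\sum_{n}J_{n}(z)t^{n}$ and the Fourier--Bessel solution of Kepler's equation $M=E-\alpha\sin E$, namely $E=M+2\sum_{n\ge1}\frac{1}{n}J_{n}(n\alpha)\sin nM$, I would differentiate in $M$ and use $\frac{dM}{dE}=1-\alpha\cos E$ to obtain, with $C=e^{iM}$ and $u=e^{iE}$,
\[
F(C)=\frac{1}{1-\frac{\alpha}{2}\left(u+u^{-1}\right)},\qquad u=C\exp\!\left[\frac{\alpha}{2}\left(u-u^{-1}\right)\right],
\]
where $u=u(C)$ is the branch of this transcendental equation that satisfies $u=1$ at $C=1$. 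By analytic continuation from $|C|=1$ this identity holds on the whole annulus $\rho<|C|<\rho^{-1}$ of convergence of the bilateral series (\ref{F}), where $\rho=\alpha\,e^{\sqrt{1-\alpha^{2}}}/(1+\sqrt{1-\alpha^{2}})$ is obtained from the Debye estimate $|J_{n}(n\alpha)|^{1/n}\to\rho$; one checks that $\rho$ is exactly the lower bound appearing in (\ref{bound}).

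Given this, (\ref{eqC}) becomes $1-\frac{\alpha}{2}(u+u^{-1})=\frac{D}{2(D+1)}$, i.e. $u+u^{-1}=\frac{D+2}{\sqrt{D+1}}$; the two roots $u=\sqrt{D+1}$ and $u=\alpha$ are reciprocals of one another, and substituting them back into $C=u\exp[-\frac{\alpha}{2}(u-u^{-1})]$ produces a reciprocal pair of candidate values, exactly one of which lies in $(0,1)$. The root $u=\alpha$ gives $C(D)=\alpha\,e^{\frac{1}{2}(1-\alpha^{2})}=\exp\!\big(\tfrac12\tfrac{D}{D+1}\big)/\sqrt{D+1}$, and $C(D)\in(0,1)$ follows from the elementary inequality $\frac{x}{1+x}<\ln(1+x)$ $(x>0)$ with $x=D$; by the uniqueness built into the Problem this is $C(D)$, and at that point $u=\alpha$. (The branch choice can also be verified directly: for real $C\in(0,1)$ one has $M=i\mu$, $E=i\eta$ with $\eta-\alpha\sinh\eta=\mu$, and the relevant solution is $\eta=-\ln\alpha$.)

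For $C_{1}$ and $C_{2}$ I would differentiate $u=C\exp[\frac{\alpha}{2}(u-u^{-1})]$ implicitly. Differentiating in $C$ gives $C\,u'(C)=u\,F(C)$, hence $F_{1}(C)=CF'(C)=\frac{\alpha}{2}F(C)^{3}(u-u^{-1})$; differentiating in the parameter $\alpha$ and using $\partial_{\alpha}J_{n}(n\alpha)=nJ_{n}'(n\alpha)$, so that $F_{2}(C)=\partial_{\alpha}F(C)$ at fixed $C$, gives $F_{2}(C)=F(C)^{2}\big[\tfrac12(u+u^{-1})+\tfrac{\alpha}{4}F(C)(u-u^{-1})^{2}\big]$. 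Specialising to $u=\alpha$ yields $F(C(D))=\frac{2(D+1)}{D}$, $F_{1}(C(D))=-\frac{4(D+1)^{2}}{D^{2}}$ and an explicit value for $F_{2}(C(D))$. Equation (\ref{eqC1}) is linear in $C_{1}$ and (\ref{eqC2}) is linear in $C_{2}$, so solving (\ref{eqC1}) for $C_{1}$ and then substituting into (\ref{eqC2}) and solving for $C_{2}$ produces the formulas in (\ref{solution}).

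The \textbf{main obstacle} is putting the closed form for $F$ on a rigorous footing: deriving it from the Kepler expansion on $|C|=1$ and continuing it analytically across the annulus $\rho<|C|<\rho^{-1}$, and then verifying that $C(D)$ lies in that annulus. The latter is immediate from (\ref{bound}), since $\rho<1$ (equivalently $\alpha e^{\sqrt{1-\alpha^{2}}}<1+\sqrt{1-\alpha^{2}}$) forces $\rho<C(D)<1<\rho^{-1}$. A secondary point is the choice of branch $u(C)$, which is handled either through the Kepler‑equation description above or, more cheaply, by appealing to the uniqueness asserted in the Problem together with the observation that exactly one candidate value is in $(0,1)$. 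Everything after that is the routine, if somewhat lengthy, implicit‑differentiation bookkeeping for $F_{1}$ and $F_{2}$ and the subsequent linear algebra for $C_{1}$ and $C_{2}$.
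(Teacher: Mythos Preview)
Your approach is essentially the same as the paper's: both introduce the Kepler substitution $C=e^{iM}$, $M=E-\varepsilon\sin E$ with $\varepsilon=(D+1)^{-1/2}$ and use the classical Kapteyn identity $\sum_{n}J_{n}(n\varepsilon)e^{inM}=(1-\varepsilon\cos E)^{-1}$ to turn (\ref{eqC}) into an algebraic equation, then evaluate $F_{1}$ and $F_{2}$ at the resulting point and solve the linear equations (\ref{eqC1})--(\ref{eqC2}). The differences are cosmetic or expository. The paper works with the trigonometric variables $E,\omega,r$ and quotes Watson's formulas 17.21~(6), (9)--(10) for $F,F_{1},F_{2}$ directly, whereas you pass to the exponential variable $u=e^{iE}$ and recover $F_{1}=CF'(C)$ and $F_{2}=\partial_{\alpha}F$ by implicit differentiation of the single identity for $F$; the two routes give the same values $F_{1}(C(D))=-4(D+1)^{2}/D^{2}$ and $F_{2}(C(D))=4(D+1)^{5/2}/D^{2}$. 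You also add a layer the paper omits, namely the analytic continuation off $|C|=1$ and the check that $C(D)$ lies in the annulus of convergence, together with a branch discussion for $u(C)$; the paper simply applies the Kapteyn formulas at the relevant (complex) value of $M$ without comment.
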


\begin{proof}
Let
\begin{equation}
\varepsilon=\frac{1}{\sqrt{D+1}},\quad C=e^{\mathrm{i}M},\quad M=E-\varepsilon
\sin(E).\label{eM}%
\end{equation}
Using (\ref{eM}) and the formula \cite[2.1 (2)]{MR1349110},
\begin{equation}
J_{-n}(z)=\left(  -1\right)  ^{n}J_{n}(z),\label{J-}%
\end{equation}
we can rewrite (\ref{eqC}) as%
\begin{equation}
\frac{2}{1-\varepsilon^{2}}=1+2%
{\displaystyle\sum\limits_{n=1}^{\infty}}
J_{n}\left(  n\varepsilon\right)  \cos(nM)=\frac{1}{1-\varepsilon\cos
(E)},\label{sum1}%
\end{equation}
where we have used \cite[17.21 (6)]{MR1349110} and%
\begin{equation}
r=\frac{\alpha\left(  1-\varepsilon^{2}\right)  }{1+\varepsilon\cos\left(
\omega\right)  }=\alpha\left[  1-\varepsilon\cos\left(  E\right)  \right]
.\label{r}%
\end{equation}
It follows from (\ref{sum1}) that%
\begin{equation}
E=\arccos\left(  \frac{1+\varepsilon^{2}}{2\varepsilon}\right)  \label{E}%
\end{equation}
and therefore%
\[
M=\arccos\left(  \frac{1+\varepsilon^{2}}{2\varepsilon}\right)  +\frac
{\mathrm{i}}{2}\left(  \varepsilon^{2}-1\right)  .
\]
Thus,%
\[
C(D)=e^{\mathrm{i}M}=\varepsilon\exp\left(  \frac{1-\varepsilon^{2}}%
{2}\right)  =\frac{\exp\left(  \frac{1}{2}\frac{D}{D+1}\right)  }{\sqrt{D+1}}.
\]

Using (\ref{eM}) and (\ref{J-}) in (\ref{F12}), we have \cite[17.21
(9-10)]{MR1349110}
\begin{align}
F_{1}(C)  &  =2\mathrm{i}%
{\displaystyle\sum\limits_{n=1}^{\infty}}
nJ_{n}\left(  n\varepsilon\right)  \sin(nM)=\frac{\alpha^{2}}{r^{2}}%
\sin\left(  \omega\right)  \frac{\varepsilon}{\sqrt{1-\varepsilon^{2}}%
}\mathrm{i},\label{F11}\\
F_{2}(C)  &  =2%
{\displaystyle\sum\limits_{n=1}^{\infty}}
nJ_{n}^{\prime}\left(  n\varepsilon\right)  \cos(nM)=\frac{\alpha^{2}}{r^{2}%
}\cos\left(  \omega\right)  ,\nonumber
\end{align}
where \cite[17.2 (1-3)]{MR1349110}%
\begin{equation}
\sin\left(  \omega\right)  =\frac{\sqrt{1-\varepsilon^{2}}}{1-\varepsilon
\cos\left(  E\right)  }\sin(E). \label{w}%
\end{equation}
Using (\ref{r}), (\ref{E}) and (\ref{w}) in (\ref{F11}), we obtain%
\begin{equation}
F_{1}(C)=-\frac{4}{\left(  1-\varepsilon^{2}\right)  ^{2}}=-4\left(
\frac{D+1}{D}\right)  ^{2},\quad F_{2}(C)=\frac{4}{\varepsilon\left(
1-\varepsilon^{2}\right)  ^{2}}=4\frac{\left(  D+1\right)  ^{\frac{5}{2}}%
}{D^{2}}. \label{Fexact}%
\end{equation}
Replacing (\ref{Fexact}) in (\ref{eqC1})-(\ref{eqC2}), the result follows.
\end{proof}

An easy computation, shows that (\ref{solution}) implies (\ref{bound}) and
(\ref{asympt}).

\bibliographystyle{abbrv}

\end{document}